\newcommand{\ze}{\mathop{\zeta}}
\newcommand{\sq}[1]{\mathbf{#1}}
\newcommand{\z}[1]{\ze{(\sq{#1})}}
\newcommand{\sh}{\mathop\mathrm{{sh}}}
\newcommand{\st}{\mathop{\mathrm{st}}}
\newcommand{\M}[1]{\mathcal{M}_{0, #1}}
\newcommand{\Md}[1]{\mathcal{M}^\delta_{0, #1}}
\newcommand{\Ms}[1]{\overline{\mathcal{M}}_{0, #1}}
\newcommand{\Msh}{\mathop{\mathfrak{sh}}}
\newcommand{\Mst}{\mathop{\mathfrak{st}}}
\newcommand{\wt}[1]{\mathop{{w}}(\sq{#1})}
\newcommand{\set}[1]{\underline{\mathbf{#1}}}
\newcommand{\f}[2]{f_{#1}^{#2}}
\newcommand{\sw}[1]{\mathop{r_{#1}}}
\newcommand{\dlog}{\mathop{d\, log}}
\newcommand{\cyc}[1]{\mathop{\Delta}(#1)}
\theoremstyle{plain}
\newtheorem{prop}{Proposition}
\newtheorem{theorem}{Theorem}
\newtheorem*{conj}{Conjecture}
\theoremstyle{definition}
\theoremstyle{remark}
\begin{document}

\title{On generalized stuffle relations between cell-zeta values}

\begin{abstract}
We introduce a family of linear relations between 
 cell-zeta values
that have a form similar to
product map relations
and jointly with them imply stuffle relations between multiple zeta values.
\end{abstract}

\author{Nikita Markarian}

\email{nikita.markarian@gmail.com}

\date{}

\address{Institut des Hautes \'Etudes Scientifiques, Le Bois-Marie 35 rte de Chartres, 91440 Bures-sur-Yvette, France}

\maketitle

\section*{Introduction}

Multiple zeta values are special values of multiple zeta functions
on the one hand and special values of multiple polylogarithms on the other.
Hence, they may be represented either as sums of number series
or as integrals.

Multiple zeta values form an algebra over rational numbers. A product of two of them may be presented as a linear combination
of multiple zeta values with integer coefficients by means
of each representation. It gives two systems of relations which
multiple zeta values obey. 

The first one is called shuffle relations.
They immediately follow from  Fubini's theorem
applied to the integral representation.

The second family is called stuffle relations
and is given by rearrangement of summands in the product
of number series.
They are  not so evident
in the integral presentation. 
The equality  of  corresponding integrals is
based on  Fubini's theorem,  relation (\ref{relation}), 
which is a form of  Arnold's relation, and
some coordinate transformations.
These transformations are given by permutations of
coordinates in cubical coordinates, but in  simplicial coordinates
they are 
birational transformations.

These relations may be extended by 
regularizations of some equalities with divergent series, see
\cite{IKZ, Racinet}.
This extended system of relations is called regularized double shuffle relations.
The long-standing conjecture  states that
they imply 
all rational relations between multiple zeta
values.

Multiple zeta values of weight $n$ being values
of integrals are periods of the pair $(\Md{n+3}, \Md{n+3}\setminus\M{n+3})$
of a special kind  (\cite{GM}).
In \cite{B,BCS} all periods of such  pairs were studied.
In \cite{BCS} they were called cell-zeta values.

As well as multiple zeta values, 
cell-zeta values obey a lot of relations over rational numbers.
By the main result of \cite{B}, all cell-zeta values are rational 
combinations of multiple zeta values.
In light of this, it is natural to try to find
 a set  of relations on cell-zeta values,
which allows to express any cell-zeta value
in terms of multiple zeta values and  
 implies all known relation on multiple zeta values.
We suggest a candidate for this, which consists of  
two families of relations.
In \cite{BCS} another system of relations
on cell-zeta values is written down.
It would be very interesting to compare them.

The first family 
containing quadratic-linear relations was introduced in \cite{B}.  
It is analogous to shuffle relations and follows from  Fubini's theorem. 
In \cite{BCS} these relations called
product map relations. We suggest the term "generalized shuffle relations"
to emphasize
the similarity between our pair of families with the pair of shuffle and stuffle
relations. 

The second family is what we call generalized stuffle relations.
This is a family of linear relations following from the 
relative version of  Fubini's theorem.
They seem to be new.
These relations generalize above-mentioned 
manipulations with integrals in cubical coordinates.
It means in particular that  generalized shuffle relations 
and generalized stuffle relations 
imply usual stuffle relations.
This is the main result of the paper.
%Moreover, we define the regularized version of generalized relations
%in a way parallel to the definition
%of regularized double shuffle relations, 
%and they imply generalized double shuffle relations for trivial reasons.

The natural question is whether  generalized shuffle and stuffle relations   imply
regularized double shuffle relations.
The answer seems to be no. But if we add relations implied by Stokes' theorem to the generalized shuffle and stuffle relations, the answer becomes positive.
In \cite{Kaneko} a set of linear relations between multiple zeta values is introduced. It is  proved there that these relations along with double shuffle relations imply regularized double shuffle relations.
Thus, to confirm a positive answer to the question above one needs to show that the Kaneko--Yamamoto 
integral-series relations follow from Stokes' theorem and generalized stuffle relations.
It is a subject of another paper \cite{KY}.

In contrast to  usual stuffle relations
(see nevertheless \cite{S}),
generalized stuffle relations have a  clear motivic nature.
A motivic version of these relations
is the subject of future research.

{\bf Acknowledgments.} I am grateful to S.~Charlton, A.~Khoroshkin, A.~Le\-vin and
E.~Panzer for fruitful discussions. 
I am very grateful to the referee for  helpful suggestions, which substantially improved the paper. I thank IHES for hospitality and excellent working environment.

\section{Shuffle and stuffle relations}

\subsection{Multiple zeta values}

Call a finite sequence of natural numbers $(k_1, \dots, k_n)$
convergent if $k_1\ge 2$. For a convergent sequence
$\sq{k}=(k_1,\dots ,k_n)$  
the multiple zeta value is defined by the integral (see e.~g. \cite{IKZ})
\begin{equation}
\z{k}=\int_{\Delta_{\wt{k}}} \omega_1(t_1)\wedge\dots\wedge \omega_{\wt{k}}(t_{\wt{k}}),
\label{iter}
\end{equation}
where $\Delta_{\wt{k}}=\{1>t_1>\dots>t_{\wt{k}}>0\}$, $\wt{k}=k_1+\dots +k_n$ is weight of the sequence and
$$
\omega_i(t)=
\begin{cases} dt/(1-t) & \mbox{if $i\in \{k_1, k_1+k_2, \dots, k_1+\dots+k_n \}$} \\
dt/t & \mbox{otherwise} \end{cases}
$$
Thus multiple zeta values are  iterated integrals.

A more conventional way to define multiple zeta values
is by series representation 
ascending to Euler, but we will not need it.

\subsection{Shuffle relations}

With a finite sequence of natural numbers 
$\sq{k}=(k_1,\dots ,k_n)$  associate the word 
$z_{\sq{k}}=x^{k_1-1}yx^{k_2-1}\dots x^{k_{n-1}}y$
of two letters $x$ and $y$. It establishes a bijection between
sequences and words ending in $y$.

A finite multiset is an unordered finite list with possible repetitions.
For a multiset $M$,  denote by $x\cdot M$
the result of the action of operation $x\cdot$ on $M$ elementwise. 

Define shuffle product $\sh(\,\cdot\,, \,\cdot\,)$ of two words  in letters $x$ and $y$
as a multiset of words given  by the recursive rule
\begin{equation}
\sh(v\cdot z_1, u\cdot z_2)= 
v\cdot \sh(z_1, u\cdot z_2)\mathbin{\cup}
u\cdot \sh(v\cdot z_1,  z_2),
\label{sh}
\end{equation} 
where $u, v\in\{x,y\}$ and  $\sh(1, z)=\sh(z, 1)= \{z\}$.

One may see that the shuffle product of two words ending in $y$
consists of words ending in $y$. It defines the shuffle product of sequences
of natural numbers, which we denote likewise by $\sh(\,\cdot\,, \,\cdot\,)$.
\begin{prop}[Shuffle relations]
Let $\sq{k}$ and $\sq{l}$ be convergent sequences.
Then
\begin{equation}
\z{k}\z{l}=\sum_{\sq{s} \in \sh(\sq{k}, \sq{l})} \z{s} 
\label{esh}
\end{equation}
\label{Sh}
\end{prop}
\begin{proof}
The statement is an immediate consequence of  Fubini's theorem.
It also follows from Proposition \ref{Gsh} below.  
\end{proof}

\subsection{Stuffle relations}

For a finite sequence of natural numbers 
$\sq{k}=(k_1,\dots ,k_{n-1})$,  denote 
the sequence $(k_1,\dots ,k_{n-1}, k_n)$ by $\sq{k}\cdot k_{n}$. Introduce the empty sequence $()$ such that
$()\cdot k=(k)$. 

Define stuffle product $\st(\,\cdot\,, \,\cdot\,)$ of two   sequences 
as a multiset  of
sequences given by the recursive rule
\begin{equation}
\st( \sq{k}\cdot x, \sq{l}\cdot y)= 
 \st( \sq{k}\cdot x, \sq{l})\cdot y\mathbin{\cup}
\st(\sq{k}, \sq{l}\cdot y)\cdot x\mathbin{\cup}
\st(\sq{k}, \sq{l})\cdot(x+y)
\label{st}
\end{equation} 
and by $\st((), \sq{k})=\st(\sq{k}, ())= \{\sq{k}\}$.
Note that 
$\st(\sq{k}, \sq{l})=\st(\sq{l}, \sq{k})$.

Stuffle relations follow easily from the series representation
of multiple zeta values. The proof of them in terms of integrals
may be found in \cite{G, B, S}.
We present it in a form convenient for our purposes.
\begin{prop}[Stuffle relations]
Let $\sq{k}$ and $\sq{l}$ be convergent sequences.
Then
\begin{equation}
\z{k}\z{l}=\sum_{\sq{s} \in \st(\sq{k}, \sq{l})} \z{s} 
\label{est}
\end{equation}
\label{St}
\end{prop}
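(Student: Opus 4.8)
The plan is to prove the stuffle relations (\ref{est}) by a direct analysis of the iterated integral (\ref{iter}) for the product $\z{k}\z{l}$, following the classical argument but organized so that it will later be recognized as a special case of the generalized stuffle relations. First I would write $\z{k}\z{l}$ as an integral over the product of simplices $\Delta_{\wt{k}}\times\Delta_{\wt{l}}$ in coordinates $(t_1,\dots,t_{\wt{k}})$ and $(s_1,\dots,s_{\wt{l}})$. The product region decomposes, up to a measure-zero set, into a union of chambers indexed by the ways of interleaving the two decreasing chains $1>t_1>\dots>t_{\wt{k}}>0$ and $1>s_1>\dots>s_{\wt{l}}>0$ into a single decreasing chain in $\wt{k}+\wt{l}$ variables. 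Each such shuffle chamber is a copy of $\Delta_{\wt{k}+\wt{l}}$, and restricting the integrand to it gives an iterated integral contributing a single multiple zeta value. This reproduces Proposition \ref{Sh}, the shuffle relations, and I record it as the starting point.

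The heart of the matter is to reorganize the \emph{same} integral so that the chambers are indexed instead by the terms of the stuffle product (\ref{st}). The idea is to pass to cubical-type coordinates and use the relation (\ref{relation}) — the Arnold relation — mentioned in the introduction, together with coordinate permutations. Concretely, I would group the forms $\omega_i$ according to the ``block'' structure coming from the sequences $\sq{k}$ and $\sq{l}$: each $k_j$ corresponds to a block of $k_j-1$ forms $dt/t$ followed by one form $dt/(1-t)$, and similarly for $\sq{l}$. When two blocks from the two factors get interleaved so that the final ``$dt/(1-t)$'' forms of a $k$-block and an $l$-block become adjacent (in the sense of occupying consecutive positions in the merged chain), one applies the partial fraction / Arnold identity
\begin{equation*}
\frac{dx}{1-x}\wedge\frac{dy}{1-y}=\frac{d(xy)}{1-xy}\wedge\frac{dx}{1-x}+\frac{d(xy)}{1-xy}\wedge\frac{dy}{1-y}
\end{equation*}
(appropriately interpreted after the relevant birational change of variables) to split that chamber. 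The three terms of the recursion (\ref{st}) — append $y$, append $x$, append $x+y$ — correspond exactly to the three possibilities at the top of the merged chain: the largest variable comes from the $\sq{k}$-chain, from the $\sq{l}$-chain, or the two top variables are ``identified'' via the Arnold relation into a single block whose weight is the sum. Proceeding recursively down the chains, one matches the chamber decomposition bijectively with the multiset $\st(\sq{k},\sq{l})$, and each chamber's integral is the corresponding $\z{s}$.

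The main obstacle will be making the coordinate changes precise and checking that the boundary identifications and orientations are compatible, so that no chamber is counted twice and none is lost. In simplicial coordinates these transformations are birational rather than linear, so one must verify that the relevant Jacobians and the pullbacks of the forms $dt/t$ and $dt/(1-t)$ behave as claimed, and that the decomposition is genuinely a decomposition up to measure zero on the nose rather than merely formally. I would therefore carry the argument out on the level of the moduli space $\M{n+3}$, where the Arnold relation is an honest relation among logarithmic forms and the chambers are the cells whose integrals are the cell-zeta values; this is where the generalized stuffle relations of the paper will do the bookkeeping. The final step is purely combinatorial: an induction on $\wt{k}+\wt{l}$ (or on the number of blocks) showing that the recursive three-way split of the chamber set reproduces the recursion (\ref{st}) defining $\st$, with the base case $\st((),\sq{k})=\{\sq{k}\}$ matching the trivial chamber. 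I expect the bijection itself to be straightforward once the analytic identity is in place; the delicate point is purely the justification of the birational coordinate transformations and the relation (\ref{relation}).
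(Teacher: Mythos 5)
Your plan assembles the right ingredients --- cubical coordinates, the partial-fraction relation (\ref{relation}), order-reversing coordinate changes, and an induction on $\wt{k}+\wt{l}$ --- so it is aimed at essentially the paper's proof. But there are genuine problems. First, the displayed identity
\begin{equation*}
\frac{dx}{1-x}\wedge\frac{dy}{1-y}=\frac{d(xy)}{1-xy}\wedge\frac{dx}{1-x}+\frac{d(xy)}{1-xy}\wedge\frac{dy}{1-y}
\end{equation*}
is false (at $x=y=1/2$ the right-hand side vanishes while the left-hand side does not), and, more importantly, it has only two terms: the two-form version of (\ref{relation}) is
\begin{equation*}
\frac{d\alpha}{1-\alpha}\wedge\frac{d\beta}{1-\beta}=\frac{d\alpha}{1-\alpha}\wedge\frac{d(\alpha\beta)}{1-\alpha\beta}+\frac{d(\alpha\beta)}{1-\alpha\beta}\wedge\frac{d\beta}{1-\beta}+\frac{d\alpha}{\alpha}\wedge\frac{d(\alpha\beta)}{1-\alpha\beta},
\end{equation*}
and the third term is precisely the one that produces the ``stuffed'' entries $x+y$ in (\ref{st}); without it the recursion cannot reproduce the stuffle product. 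Second, your framing of the stuffle terms as indexing a decomposition of the domain into chambers cannot work: the factors $1/(1-\alpha\beta)$ do not localize to subregions, so the three-way split is an algebraic identity of integrands over one fixed region, not a refinement of the shuffle chamber decomposition. The clean setup (and the one in the paper) bypasses chambers entirely on the stuffle side: in cubical coordinates $\Delta_{\wt{k}}\times\Delta_{\wt{l}}$ is already a single unit cube, Fubini gives one integral of a product of factors $\f{a}{b}$ as in (\ref{e1}), and (\ref{relation}) is applied to that integrand.

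The remaining gap is the one you flag as ``the main obstacle'' but do not resolve. After each application of (\ref{relation}) the three resulting integrands are no longer of the standard form (\ref{zcube}); one needs the specific order-reversing substitutions $\sw{a}$ of (\ref{trans}) --- measure-preserving permutations of the cubical coordinates, hence birational in simplicial coordinates --- applied with the correct index ($\wt{k}$ at the outset, then $\wt{k}+\wt{l}$ and $\wt{k}+\wt{l'}$ in the inductive step) to restore that form before the induction hypothesis can be invoked. Moreover, the induction does not close on the statement (\ref{est}) itself: the three terms produced by (\ref{relation}) are shorter stuffle-type integrands multiplied by a tail of factors $\f{1}{*}$, so one must strengthen the inductive claim to (\ref{e3}), which allows an arbitrary already-split-off tail $(m_1,\dots,m_i)$. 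Neither the choice of these substitutions nor the strengthened hypothesis appears in your sketch, and that is where the actual work of the proof lies.
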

\begin{proof}
Define cubical coordinates
on the standard simplex $\Delta_{k}=\{1>t_1>\dots>t_k>0\}$   by
$$
x_1=t_1 \quad x_2=t_2/t_1\quad \dots \quad x_k=t_k/t_{k-1}.
$$
Introduce notations:
\begin{equation*}
\f{a}{b}=\frac{\prod_{i=a}^{b}x_i}{1-\prod_{i=a}^{b}x_i}.
\end{equation*}
In cubical coordinates we can rewrite definition (\ref{iter})  as
\begin{equation}
\z{k}=\int_{\square}f_{1}^{k_1} \f{1}{k_1+k_2}\cdots \f{1}{\wt{k}} \, dV,
\label{zcube}
\end{equation}
where $dV= dx_1/x_1\wedge dx_2/x_2\dots$ is the standard volume form on the torus and symbol $\square$ here and below in this proof
means the unit cube $\{0<x_i<1\}$.

Fubini's theorem gives

\begin{multline}
\z{k}\z{l}=\\
\int_{\square} \f{1}{k_1}\f{1}{k_1+k_2}\cdots \f{1}{\wt{k}}\times \\
\f{\wt{k}+1}{\wt{k}+l_1}\f{\wt{k}+1}{\wt{k}+l_1+l_2}\cdots\f{\wt{k}+1}{\wt{k}+\wt{l}} \, dV
\label{e1}
\end{multline}

Thus we need to prove that the right hand side of 
(\ref{e1}) equals to the right hand side of (\ref{est}).

Introduce  following transformations of the cube
\begin{equation}
\sw{a}(x_i)=
\begin{cases}
x_{a+1-i} & \mbox{for  $i\le a$,}\\
x_i & \mbox{for $i>a$.}
\end{cases}
\label{trans}
\end{equation}

%x_0=1!

Applying $\sw{\wt{k}}$ to the right hand side of (\ref{e1}) we get
\begin{multline}
\mbox{r.h.s. (\ref{e1})}=\\
\int_{\square} \f{1+\wt{k}-k_1}{\wt{k}}\f{1+\wt{k}-k_1-k_2}{\wt{k}}\cdots \f{1}{\wt{k}}\times \\
\f{\wt{k}+1}{\wt{k}+l_1}\f{\wt{k}+1}{\wt{k}+l_1+l_2}\cdots\f{\wt{k}+1}{\wt{k}+\wt{l}} \, dV
\label{e2}
\end{multline}

To show that right hand sides of (\ref{e2}) and (\ref{est})
are equal, we will prove a more general 
equality
\begin{multline}
\sum_{\sq{s} \in \st(\sq{k}, \sq{l})} \ze(\cdots(\sq{s}\cdot m_1)\cdot m_2)\cdots)\cdot m_i)=\\
\int_{\square} \f{1+\wt{k}-k_1}{\wt{k}}\f{1+\wt{k}-k_1-k_2}{\wt{k}}\cdots \f{1}{\wt{k}}\times \\
\f{\wt{k}+1}{\wt{k}+l_1}\f{\wt{k}+1}{\wt{k}+l_1+l_2}\cdots\f{\wt{k}+1}{\wt{k}+\wt{l}} \times\\
\f{1}{\wt{k}+\wt{l}+m_1}\f{1}{\wt{k}+\wt{l}+m_1+m_2}\cdots \f{1}{\wt{k}+\wt{l}+m_1+\dots+m_i}\, dV
\label{e3}
\end{multline}
by induction on $\wt{k}+\wt{l}$.

The base of induction is given by (\ref{zcube}).

If $\sq{k}=()$,
there is nothing to prove.
If $\sq{l}=()$, the application of $\sw{\wt{k}}$
to the integral proves the  equality.

If both sequences are not empty, let $x=k_{\wt{k}}$ and $y=l_{\wt{l}}$
be the last terms of $\sq{k}$ and $\sq{l}$.
Introduce notations
$\sq{k}=\sq{k'}\cdot x $
and $\sq{l}=\sq{l'}\cdot y$.

Substituting in $\alpha=\prod_{i=1}^{a}x_i$  and $\beta=\prod_{i=a+1}^{b} x_i$ into the relation
\begin{equation}
\frac{1}{(1-\alpha)(1-\beta)}=\frac{\alpha}{(1-\alpha)(1-\alpha\beta)}+\frac{\beta}{(1-\beta)(1-\alpha\beta)}+\frac{1}{1-\alpha\beta}
\label{relation}
\end{equation}
we have for $b>a>1$
\begin{equation*}
\f{1}{a}\f{a+1}{b}=\f{1}{a}\f{1}{b}+ \f{a+1}{b}\f{1}{b}+ \f{1}{b}
\end{equation*}

Applying this to the product of the last factors in the second and the third lines
of (\ref{e3}) we get a sum of three terms. 

The first term is 
\begin{multline}
\int_{\square} \f{1+\wt{k}-k_1}{\wt{k}}\f{1+\wt{k}-k_1-k_2}{\wt{k}}\cdots \f{1}{\wt{k}}\times \\
\f{\wt{k}+1}{\wt{k}+l_1}\f{\wt{k}+1}{\wt{k}+l_1+l_2}\cdots\f{\wt{k}+1}{\wt{k}+\wt{l'}} \times\\
\f{1}{\wt{k}+\wt{l'}+y}\f{1}{\wt{k}+\wt{l'}+y+m_1}\cdots \f{1}{\wt{k}+\wt{l'}+y+m_1+\dots+m_i}\, dV
\label{e4}
\end{multline}
By the induction assumption it is equal to 
$\sum_{\sq{s} \in \st(\sq{k}, \sq{l'})} \ze(\cdots(\sq{s}\cdot y)\cdot m_1)\cdots)\cdot m_i)$.

The second term is 
\begin{multline}
\int_{\square} \f{1+\wt{k}-k_1}{\wt{k}}\f{1+\wt{k}-k_1-k_2}{\wt{k}}\cdots \f{1+x}{\wt{k}}\times \\
\f{\wt{k}+1}{\wt{k}+l_1}\f{\wt{k}+1}{\wt{k}+l_1+l_2}\cdots\f{\wt{k}+1}{\wt{k}+\wt{l}} \times\\
\f{1}{\wt{l}+\wt{k'}+x}\f{1}{\wt{l}+\wt{k'}+x+m_1}\cdots \f{1}{\wt{l}+\wt{k'}+x+\dots+m_i}\, dV
\label{e5}
\end{multline}
Applying  $\sw{\wt{k}+\wt{l}}$  to it we get
\begin{multline}
\mbox{(\ref{e5})} =
\int_{\square} \f{1+\wt{l}-l_1}{\wt{l}}\f{1+\wt{l}-l_1-l_2}{\wt{l}}\cdots \f{1}{\wt{l}}\times \\
\f{\wt{l}+1}{\wt{l}+k_1}\f{\wt{l}+1}{\wt{l}+k_1+k_2}\cdots\f{\wt{l}+1}{\wt{l}+\wt{k'}} \times\\
\f{1}{\wt{l}+\wt{k'}+x}\f{1}{\wt{l}+\wt{k'}+x+m_1}\cdots \f{1}{\wt{l}+\wt{k'}+x+\dots+m_i}\, dV
\label{e6}
\end{multline}
By the induction assumption it is equal to 
$\sum_{\sq{s} \in \st(\sq{l}, \sq{k'})} \ze(\cdots(\sq{s}\cdot x)\cdot m_1)\cdots)\cdot m_i)$.

The third term is 
\begin{multline}
\int_{\square} \f{1+\wt{k}-k_1}{\wt{k}}\f{1+\wt{k}-k_1-k_2}{\wt{k}}\cdots \f{1+x}{\wt{k}}\times \\
\f{\wt{k}+1}{\wt{k}+l_1}\f{\wt{k}+1}{\wt{k}+l_1+l_2}\cdots\f{\wt{k}+1}{\wt{k}+\wt{l'}} \times\\
\f{1}{\wt{k'}+\wt{l'}+x+y}\f{1}{\wt{k'}+\wt{l'}+x+y+m_2}\cdots \f{1}{\wt{k'}+\wt{l'}+x+y+\dots+m_i}\, dV
\label{e7}
\end{multline}
Applying  $\sw{\wt{k}+\wt{l'}}$  to it we get
\begin{multline}
\mbox{(\ref{e7})} =
\int_{\square} \f{1+\wt{l'}-l_1}{\wt{l'}}\f{1+\wt{l'}-l_1-l_2}{\wt{l'}}\cdots \f{1}{\wt{l'}}\times \\
\f{\wt{l'}+1}{\wt{l'}+k_1}\f{\wt{l'}+1}{\wt{l'}+k_1+k_2}\cdots\f{\wt{l'}+1}{\wt{l'}+\wt{k'}} \times\\
\f{1}{\wt{l'}+\wt{k'}+x+y}\f{1}{\wt{l'}+\wt{k'}+x+y+m_1}\cdots \f{1}{\wt{l'}+\wt{k'}+x+y+\dots+m_i}\, dV
\label{e8}
\end{multline}
By the induction assumption it is equal to 
$\sum_{\sq{s} \in \st(\sq{l'}, \sq{k'})} \ze(\cdots(\sq{s}\cdot (x+y))\cdot m_1)\cdots)\cdot m_i)$.

By (\ref{st}) the sum of these three terms
equals to 
$\sum_{\sq{s} \in \st(\sq{k}, \sq{l})} \ze(\cdots(\sq{s}\cdot m_1)\cdot m_2)\cdots)\cdot m_i)$,
which proves (\ref{e3}) and thus the proposition.
\end{proof}

\section{Generalized shuffle and stuffle relations}

\subsection{$\M{S}$}
For a finite set $S$, $|S|> 2$
denote by $\M{S}$ the moduli space of its embeddings
$S\hookrightarrow \mathbb{P}^1$
to the complex projective line 
considered up to the
action of the M\"obius group.
This is a smooth affine variety and it has a smooth
projective compactification $\Ms{S}$, which is the
moduli space of  stable curves.
The complement $\Ms{S}\setminus\M{S}$
is the union of normal crossing divisors.
These divisors are numerated by
  partitions of $S$ in two subsets with cardinalities more than $2$.

%Choose a three-element subset of $S$.
%For a point of $\M{S}$ introduce on $\mathbb{P}^1$
%the coordinate such that coordinates of points labeled 
%by the chosen subset are $0$, $\infty$ and $1$.
%Coordinates of other points of the configuration
%are called simplicial coordinates on $\M{S}$.
%Thus a point of $\M{S}$ with simplicial coordinates $(t_1, \dots, t_n)$
%is  $(0, \infty, 1, t_1, \dots t_n)$, where and below $|S|=n+3$.

Present $S$ as a union of the three-element set $\{\underline{0}, \underline{\infty}, \underline{1}\}$ and a set with $n$ elements, where and below $|S|=n+3$.
For a point of $\M{S}$ introduce on $\mathbb{P}^1$
the coordinate such that coordinates of points labeled 
by  $\underline{0}, \underline{\infty}, \underline{1}$ are $0$, $\infty$ and $1$
correspondingly.
Coordinates of  points labeled by the finite set with $n$ elements
are called simplicial coordinates on $\M{S}$.
Thus a point of $\M{S}$ with simplicial coordinates $(t_1, \dots, t_n)$
is  $(0, \infty, 1, t_1, \dots t_n)$.
  
The algebra of regular differential
forms on $\M{S}$ with logarithmic singularities at
infinity
is generated by $1$-forms 
\begin{equation}
\omega_{ij}=\dlog (t_i-t_j) \quad 0\le i< j\le n+1 \quad ij\neq 0n+1,
\label{omegas}
\end{equation}
where $t_i$ are simplicial coordinates, $t_0=1$ and $t_{n+1}=0$.
The only relations between  them 
are Arnold's relations:
\begin{equation*}
\omega_{ij}\wedge\omega_{jk}+\omega_{jk}\wedge\omega_{ik}+\omega_{ik}\wedge\omega_{ij}=0
\end{equation*}

Rewrite these relations in new coordinates $\alpha=t_j/t_i$, $\beta=t_k/t_j$ and $t=t_i$:
\begin{multline}
\left(-\frac{d\alpha}{1-\alpha}+\frac{dt}{t}\right)\wedge\left(-\frac{d\beta}{1-\beta}+\frac{d\alpha}{\alpha}+\frac{dt}{t}\right)+\\
\left(-\frac{d\beta}{1-\beta}+\frac{d\alpha}{\alpha}+\frac{dt}{t}\right)\wedge\left(-\frac{d(\alpha\beta)}{1-\alpha\beta}+\frac{dt}{t}\right)+\\
\left(-\frac{d(\alpha\beta)}{1-\alpha\beta}+\frac{dt}{t}\right)\wedge \left(-\frac{d\alpha}{1-\alpha}+\frac{dt}{t}\right)=0
\label{calc}
\end{multline}
Collecting terms with $d\alpha \wedge d\beta$
we get relation (\ref{relation}), which is crucial in the proof of Proposition \ref{St}. 

\subsection{$\Md{S}$}

Let $S$ be a cyclically ordered set.
In \cite{B} the space $\Md{S}$ is introduced
and described in great detail.
It may be thought as a partial compactification of
$\M{S}$
$$
\M{S}\subset \Md{S}\subset  \Ms{S},
$$
which contains only those compactification divisors 
for which corresponding partitions of $S$ respect the cyclic order.
It is proved in \cite{B} that $\Md{S}$ is a smooth affine variety.

For any subset $T\subset S$ the forgetful map
$$
\Md{S}\to \Md{T}
$$
is defined, where the cyclic order on $T$ is the restriction of the one
from $S$. 
Being restricted on $\M{S}$ this map forgets
points labeled by elements of $S\setminus T$. 

As above present $S$ as a union of the three-element set $\{\underline{0}, \underline{\infty}, \underline{1}\}$ and a set with $n$ elements, and introduce a  cyclic order on it as follows:
$[\underline{0}, \underline{\infty}, \underline{1}, 1, \dots, n]$.
Consider the standard simplex
$\Delta_{n}=\{1>t_1>\dots>t_{n}>0\}$
in $\M{S}$, where $t_i$ are simplicial coordinates.
This set depends on the order of the labeling
set, but one may see that it depends only on the cyclic order.
Then, the closure of the standard simplex in $\Md{S}$
is compact; this is the Stasheff polytope.
Denote this subset of $\Md{S}$ by 
$\cyc{S}$ to emphasize its dependence
on the cyclic order.

Since $\cyc{S}$ is compact, a regular differential
form of top degree may be integrated by it.
For  a  regular differential
form with logarithmic singularities at infinity $\omega$ on $\Md{S}$ 
the integral $\int_{\cyc{S}} \omega$
is  a period of the pair $(\Md{S},\, \Md{S}\setminus\M{S})$, see
\cite{GM}.
In \cite{BCS} these numbers are called cell-zeta values.
By the very definition (\ref{iter}), 
multiple zeta values are examples of such numbers, convergence 
of  sequence $\sq{k}$ implies that the form 
has no poles on divisors from  $\Md{S}\setminus\M{S}$.

\subsection{Generalized shuffle relations}

Let  $\set{3}$
be a cyclically ordered
 set with three elements.
Define a 3-pointed cyclically ordered set  $\mathcal{T}$
as a pair of a  cyclically ordered set  $T$ and a
monotonic embedding
$\imath\colon\set{3}\hookrightarrow T$.

Let $\mathcal{T}_{1,2}$ be a pair
of 3-pointed cyclically ordered sets and 
$\imath_{1,2}\colon\set{3}\hookrightarrow T_{1,2}$
are corresponding embeddings. 
Let $T_1\coprod_{\set{3}}T_2$ be the colimit of the diagram in the  category of sets
given by these embeddings.
Denote by $\Msh(\mathcal{T}_1, \mathcal{T}_2)$  the set of  cyclically  ordered
sets 
given by all cyclic orders on   $T_1\coprod_{\set{3}}T_2$ for which
projections on $T_1$
and $T_2$ are monotonic.

For any $C\in \Msh(\mathcal{T}_1, \mathcal{T}_2)$
consider the map
\begin{equation}
\beta_{C}\colon \Md{C} \to \Md{T_1}\times \Md{T_2},
\end{equation}
which is the forgetful map on each factor.
In  \cite[2.7]{B} this map is called the product map.

The following proposition is taken from \cite{B, BCS},
where it is called product map relations.
 
\begin{prop}[Generalized shuffle relations]
Using notations as above
let $\phi$ and $\psi$ be regular top-degree differential
forms on $\Md{T_1}$ and $\Md{T_2}$ correspondingly.
Then
\begin{equation}
\left(\int_{\cyc{T_1}} \phi\right)\cdot
\left(\int_{\cyc{T_2}} \psi\right)=
\sum_{C\in \Msh(\mathcal{T}_1, \mathcal{T}_2)}\int_{\cyc{C}}\beta_{C}^*( \phi\boxtimes \psi)
\label{gsh}
\end{equation}
\label{Gsh}
\end{prop}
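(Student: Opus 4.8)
\emph{Proof strategy.} The plan is to deduce (\ref{gsh}) from the classical Fubini theorem together with the standard triangulation of a product of simplices by shuffle simplices. By the ordinary Fubini theorem, $\bigl(\int_{\cyc{T_1}}\phi\bigr)\bigl(\int_{\cyc{T_2}}\psi\bigr)=\int_{\cyc{T_1}\times\cyc{T_2}}\phi\boxtimes\psi$ (the integrands are regular and the cells are compact, so everything converges). It is therefore enough to present $\cyc{T_1}\times\cyc{T_2}$, up to a subset of measure zero, as a disjoint union of pieces indexed by $C\in\Msh(T_1,T_2)$ onto which the right-hand side integrands transport to $\phi\boxtimes\psi$.

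Fix the common subset $\set{3}\subset T_1,T_2$ and use it to introduce simplicial coordinates on $\Md{T_1}$, $\Md{T_2}$ and on every $\Md{C}$. Write the free points of $T_1$ as $t_1,\dots,t_p$ and those of $T_2$ as $s_1,\dots,s_q$, listed in the cyclic order. Then the interior of $\cyc{T_1}$ is the open simplex $\{1>t_1>\dots>t_p>0\}$ and likewise for $\cyc{T_2}$, while an element of $\Msh(T_1,T_2)$ is exactly a way of interleaving the $t_i$ with the $s_j$ keeping each sequence in its given order; thus $\Msh(T_1,T_2)$ is canonically the set of shuffles of $(t_1,\dots,t_p)$ and $(s_1,\dots,s_q)$. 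For such a $C$ the interior of $\cyc{C}$ is, in the simplicial coordinates of $\Md{C}$ based at $\set{3}$, the open simplex cut out of $\{1>\cdots>0\}$ by the chosen interleaving; as $C$ runs over $\Msh(T_1,T_2)$ these are precisely the maximal simplices of the classical staircase triangulation of the product of the two open simplices, so they are pairwise disjoint and their union is a full-measure subset of $\cyc{T_1}\times\cyc{T_2}$.

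In these simplicial coordinates the product map is just $\beta_C\colon(t_\bullet,s_\bullet)\mapsto\bigl((t_\bullet),(s_\bullet)\bigr)$ — the two forgetful maps are the coordinate projections — so $\beta_C$ restricts to a real-analytic isomorphism of the interior of $\cyc{C}$ onto the corresponding open shuffle simplex inside $\cyc{T_1}\times\cyc{T_2}$, with unit Jacobian. Moreover $\beta_C^{*}(\phi\boxtimes\psi)$ is again a regular top-degree form on $\Md{C}$, being the pullback of one under a morphism of smooth affine varieties; hence $\int_{\cyc{C}}\beta_C^{*}(\phi\boxtimes\psi)$ converges and, the boundary of the cell having measure zero, equals the integral over the interior, which by the change-of-variables formula equals the integral of $\phi\boxtimes\psi$ over the $C$-th open shuffle simplex. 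Summing over $C\in\Msh(T_1,T_2)$ and using additivity of the integral over the triangulation gives $\int_{\cyc{T_1}\times\cyc{T_2}}\phi\boxtimes\psi$, which is the left-hand side of (\ref{gsh}).

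The one step that uses more than elementary calculus is the assertion in the previous paragraph that $\beta_C$ is this coordinate projection and that $\cyc{C}$ is the closure of the claimed shuffle simplex — equivalently, that the combinatorial boundary of the cell $\cyc{C}$ is compatible with $\beta_C$ so that the pulled-back form stays regular on it; this is precisely the explicit description of $\Md{S}$ and of the product map $\beta_C$ given in \cite[2.7]{B}. The combinatorial ingredient, that shuffle simplices triangulate a product of simplices, is classical. Accordingly the statement is only restated here, following \cite{B,BCS}.
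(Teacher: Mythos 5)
Your argument is correct and follows essentially the same route as the paper's own proof: apply the Fubini theorem to write the product of integrals as an integral over $\cyc{T_1}\times\cyc{T_2}$, then decompose that product (up to measure zero) into the shuffle simplices indexed by $\Msh(T_1,T_2)$, onto which $\beta_C$ restricts as an isomorphism. You simply spell out in simplicial coordinates the details that the paper delegates to \cite[Corollary 7.10]{B}.
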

\begin{proof}
By  Fubini's theorem 
and because $\beta$ is an embedding containing the domain of integration,
the left hand side of (\ref{gsh})
equals to the integral of $\beta_{C}^*( \phi\boxtimes \psi)$
by  $\beta^{-1}(\cyc{T_1}\times\cyc{T_2})$.
The decomposition of the latter set in simplices corresponding 
to elements of  $\Msh(\mathcal{T}_1, \mathcal{T}_2)$ proves the statement.
For more details see \cite[Corollary 7.10]{B}.
\end{proof}

\subsection{Generalized stuffle relations}

Let  $\set{4}$
be a cyclically ordered
 set with four elements.
Define a 4-pointed cyclically ordered set  $\mathcal{T}$
as a pair of a  cyclically ordered set  $T$ and a
monotonic embedding
$\imath\colon\set{4}\hookrightarrow T$.

The Klein four-group $V$ acts on $\set{4}$.
Half of this group respects the cyclic order
and the other half reverses it.
For   $\nu\in V$ and a 4-pointed cyclically ordered set  $\mathcal{T}=(T, \imath)$
denote by $\mathcal{T}^\nu$ the 4-pointed cyclically ordered set 
 with the embedding
equal to $\imath$ composed with $\nu$
and with the cyclic ordered set equal to $T$
or to $T^{op}$ 
depending on
whether $\nu$ respects cyclic order on $\set{4}$ or not, where ${\,\cdot\,}^{op}$
means the same set with the opposite order. Denote the latter 
cyclically ordered set by $T^{\nu}$.

Let $\mathcal{T}_{1,2}$  be a pair of 4-pointed cyclically ordered sets
and 
$\imath_{1,2}\colon\set{4}\hookrightarrow T_{1,2}$ are corresponding embeddings.
Let $T_1\coprod_{\set{4}}T_2$ be the colimit of the diagram in the category of sets
given by these embeddings.
Denote by $\Mst(\mathcal{T}_1, \mathcal{T}_2)$ the set of cyclically  ordered
sets 
given by all cyclic orders on  
  $T_1\coprod_{\set{4}}T_2$
 for which 
projections on $T_1$
and $T_2$ are monotonic.

For any  $\nu\in V$, $C\in \Mst(\mathcal{T}_1, \mathcal{T}_2)$ and
$C_\nu\in \Mst(\mathcal{T}_1, \mathcal{T}_2^\nu)$
consider  maps
\begin{equation}
\begin{tikzcd}[row sep=tiny]
\gamma_{C}\colon \Md{C}\arrow[rd]& \\
 & \Md{T_1}\times \Md{T_2}\\
\gamma_{C_\nu}\colon \Md{C_\nu}\arrow[ru]& 
\end{tikzcd}
%\gamma_{C}\colon \Md{C} \to 
%\Md{T_1}\times \Md{T_2}
\end{equation}
which are  forgetful map on each factor.

\begin{prop}[Generalized stuffle relations]
Using notations as above
let $\nu$ be a non-trivial element of the Klein four-group and $\phi$ and $\psi$ be regular differential
forms on $\Md{T_1}$ and $\Md{T_2}$ correspondingly such that
$$\deg \phi+\deg \psi=|T_1|+|T_2|-7$$
Then
\begin{equation}
\sum_{C\in \Mst(\mathcal{T}_1, \mathcal{T}_2)}\int_{\cyc{C}}\gamma_{C}^*( \phi\boxtimes \psi)=
\epsilon\cdot\sum_{C_\nu\in \Mst(\mathcal{T}_1, \mathcal{T}_2^\nu)}\int_{\cyc{C_\nu}}\gamma_{C_\nu}^* (\phi\boxtimes \psi),
\label{gst}
\end{equation}
where $\epsilon=(-1)^{\small{\frac{(|T_2|-3)(|T_2|-2)}{2}}}$ if $\nu$ reverses the cyclic order on $\set{4}$
and $\epsilon=1$ if not.
\label{Gst}
\end{prop}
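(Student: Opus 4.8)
The plan is to imitate the proof of Proposition~\ref{Gsh}, replacing the ordinary Fubini theorem by its relative version, and then to reduce the identity to an orientation computation on a single Stasheff cell.

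First I would recall from \cite{B} that the product map $\gamma_C$ is a closed embedding of the smooth affine variety $\Md{C}$ onto a smooth codimension-one subvariety $\overline{D}$ of $\Md{T_1}\times\Md{T_2}$: on the open stratum $\M{C}$ it is the locus where the four points labelled by $\set{4}$ have equal cross-ratio in the two factors, and the union $\bigcup_{C\in\Mst(T_1,T_2)}\cyc{C}$ is carried by the product maps onto $\overline{D}\cap(\cyc{T_1}\times\cyc{T_2})$, the cells $\gamma_C(\cyc{C})$ fitting together along their common faces with compatible orientations -- this is the four-point analogue of the decomposition of $\beta^{-1}(\cyc{T_1}\times\cyc{T_2})$ into simplices in the proof of \ref{Gsh}. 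The essential geometric point is that the Klein four-group acts trivially on the cross-ratio; consequently the subvariety $\overline{D}$ together with the morphism $\overline{D}\to\Md{\set{4}}$ that records that cross-ratio are literally unchanged when $T_2$ is replaced by $T_2^\nu$. (Here one also uses that passing to the reversed cyclic order leaves $\Md{}$ unchanged and only, possibly, reverses the orientation of its cells.) It follows that both sides of (\ref{gst}) are integrals of one and the same top-degree form $(\phi\boxtimes\psi)|_{\overline{D}}$, taken over $\overline{D}\cap(\cyc{T_1}\times\cyc{T_2})$ on the left and over $\overline{D}\cap(\cyc{T_1}\times\cyc{T_2^\nu})$ on the right.

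By bilinearity in $(\phi,\psi)$ it suffices to treat forms of pure bidegree; there are just two, namely $(\deg\phi,\deg\psi)=(|T_1|-4,\,|T_2|-3)$ and $(|T_1|-3,\,|T_2|-4)$, which are handled in the same way, so take the first. Here $\psi$ has top degree on $\Md{T_2}$, and I apply the relative Fubini theorem to the projection $\overline{D}\to\Md{T_2}$, whose fibres are the fibres of the forgetful map $\lambda_1\colon\Md{T_1}\to\Md{\set{4}}$ and have dimension $|T_1|-4=\deg\phi$. Integrating $\phi$ along these fibres yields a function $h$ on $\Md{\set{4}}$ for which the left-hand side of (\ref{gst}) equals $\int_{\cyc{T_2}}(h\circ\lambda)\,\psi$, where $\lambda\colon\Md{T_2}\to\Md{\set{4}}$ is the forgetful map. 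Because the cross-ratio is invariant under the Klein four-group, running the same computation for $T_2^\nu$ produces the very same function $h$ and the very same form $\psi$, so the right-hand side of (\ref{gst}) equals $\epsilon\int_{\cyc{T_2^\nu}}(h\circ\lambda)\,\psi$. As $h\circ\lambda$ is a fixed function while $\psi$ ranges over all regular top-degree forms, and as $\cyc{T_2^\nu}$ is the closure of the same connected component of $\M{T_2}(\mathbb{R})$ as $\cyc{T_2}$ -- a component being determined by the cyclic order only up to reversal -- the proposition is equivalent to the single statement $\cyc{T_2^\nu}=\epsilon\,\cyc{T_2}$ of oriented chains.

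It remains to check this sign. When $\nu$ preserves the cyclic order on $\set{4}$, the set $T_2^\nu$ carries the same cyclic order as $T_2$ and $\cyc{T_2^\nu}=\cyc{T_2}$ with the same orientation, so $\epsilon=1$. When $\nu$ reverses it, $T_2^\nu$ carries the reversed cyclic order; passing from the simplicial coordinates adapted to $T_2$ to those adapted to $T_2^\nu$ is a birational change of coordinates on $\M{T_2}$ -- a reversal $t_i\mapsto t_{m+1-i}$ of the $m=|T_2|-3$ moving coordinates, composed with the reshuffling of the three points sent to $0,\infty,1$ that the reversed order (and the way $\nu$ relabels $\set{4}$) dictate -- and a direct computation shows that its Jacobian multiplies the orientation of the cell by $(-1)^{m(m+1)/2}=(-1)^{(|T_2|-3)(|T_2|-2)/2}=\epsilon$. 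I expect this last computation -- getting the orientation conventions for $\cyc{S}$ right and tracking the sign through the coordinate change, in particular the factor beyond the naive $(-1)^{m(m-1)/2}$ of the coordinate reversal -- to be the main obstacle; the rest is a faithful transcription of the mechanism behind the proof of Proposition~\ref{St}, with the triviality of the Klein-four action on $\Md{\set{4}}$ playing the role that the invariance of the cross-ratio (relation (\ref{relation})) plays there.
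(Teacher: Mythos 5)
Your proposal follows the same basic mechanism as the paper's proof --- the relative Fubini theorem applied to the fibration over $\Md{\set{4}}$ induced by the four marked points, plus the invariance of the cross-ratio under the Klein four-group --- but it organizes the reduction differently, and the difference matters for where the sign $\epsilon$ ends up living. The paper takes each $\Md{C}$, fibers it over $\Md{\set{4}}$, applies a relative version of Proposition \ref{Gsh} to identify $\sum_C\int_{\cyc{C}/\cyc{\set{4}}}\gamma_C^*(\phi\boxtimes\psi)$ with the product of the two fiber-wise integrals $\int_{\cyc{T_1}/\cyc{\set{4}}}\phi$ and $\int_{\cyc{T_2}/\cyc{\set{4}}}\psi$ as forms on $\Md{\set{4}}$, and then obtains $\epsilon$ from the single identity $\int_{\cyc{T_2}/\cyc{\set{4}}}\psi=\epsilon\int_{\cyc{T_2^\nu}/\cyc{\set{4}}}\psi$, which it derives by lifting $\nu$ to a M\"obius transformation of $\mathbb{P}^1$ covering the identity of $\Md{\set{4}}$; this treats both pure bidegrees uniformly and makes the sign computable without choosing simplicial coordinates. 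You instead push the whole integral down to $\Md{T_2}$ and reduce to the oriented-chain identity $\cyc{T_2^\nu}=\epsilon\,\cyc{T_2}$, to be verified by a Jacobian computation that you explicitly leave undone; since $\epsilon$ is the entire content of the proposition, that computation is not an afterthought but the heart of the matter, and the M\"obius-lift argument of the paper is precisely the device that settles it. Two further cautions. First, your claim that the two bidegrees ``are handled in the same way'' is not accurate under your organization: when $\phi$ is the top-degree form you must project to $\Md{T_1}$ instead, and the $\nu$-dependence then sits in the fiber-wise integral $\int_{\cyc{T_2}/\cyc{\set{4}}}\psi$ of the lower-degree form $\psi$ rather than in the chain $\cyc{T_2}$ itself, so the reduction target is a different (relative) statement; the paper's choice of always fibering over $\Md{\set{4}}$ avoids this asymmetry. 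Second, your reduction tacitly assumes that the orientations of the cells $\cyc{C^\nu}$, transported through the fiber-product decomposition of $\overline{D}\cap(\cyc{T_1}\times\cyc{T_2^\nu})$, are governed entirely by the orientation of $\cyc{T_2^\nu}$; this is the relative analogue of Proposition \ref{Gsh} that the paper invokes, and it deserves to be stated rather than absorbed into the phrase ``fitting together with compatible orientations.''
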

\begin{proof}
Note that there are only two possibilities: $\phi$ is a top-degree
form and degree of $\psi$ is one less
and vice versa.

Consider the forgetful projection $\Md{C}\to \Md{\set{4}}$.
By  Fubini's theorem,
\begin{equation}
\int_{\cyc{C}}\gamma_{C}^*( \phi\boxtimes \psi)=\int_{\cyc{\set{4}}}\left( \int_{\cyc{C}/\cyc{\set{4}}}\gamma_{C}^*( \phi\boxtimes \psi)\right),
\label{fiber}
\end{equation}
where $\int_{\cyc{C}/\cyc{\set{4}}}$ is the fiber-wise
integral of the projection. By the relative analog
of Proposition \ref{Gsh}, we have
\begin{equation*}
\sum_{C\in \Mst(\mathcal{T}_1, \mathcal{T}_2)}\int_{\cyc{C}/\cyc{\set{4}}}\gamma_{C}^*( \phi\boxtimes \psi)=
\left(\int_{\cyc{T_1}/\cyc{\set{4}}} \phi\right)\cdot
\left(\int_{\cyc{T_2}/\cyc{\set{4}}} \psi\right)
\end{equation*}
Because the cross-ratio of four points is invariant under
the Klein four-group, action of the Klein four-group on four points
of projective line may be lifted to the whole projective line.
It follows the equality of forms
on $\Md{\set{4}}$
\begin{equation*}
\int_{\cyc{T_2}/\cyc{\set{4}}} \psi=\epsilon\cdot\int_{\cyc{T_2^\nu}/\cyc{\set{4}}} \psi
\end{equation*}

Thus we get
\begin{equation*}
\begin{split}
\sum_{C\in \Mst(\mathcal{T}_1, \mathcal{T}_2)}\int_{\cyc{C}/\cyc{\set{4}}}\gamma_{C}^*( \phi\boxtimes \psi)=
\left(\int_{\cyc{T_1}/\cyc{\set{4}}} \phi\right)\cdot
\left(\int_{\cyc{T_2}/\cyc{\set{4}}} \psi\right)=\\
\epsilon\cdot\left(\int_{\cyc{T_1}/\cyc{\set{4}}} \phi\right)\cdot
\left(\int_{\cyc{T_2^\nu}/\cyc{\set{4}}} \psi\right)=
\epsilon\cdot\sum_{C_\nu\in \Mst(\mathcal{T}_1, \mathcal{T}_2^\nu)}\int_{\cyc{C_\nu}/\cyc{\set{4}}}\gamma_{C_\nu}^*( \phi\boxtimes \psi)
\end{split}
\end{equation*}
Integrating both sides by $\cyc{\set{4}}$ and using (\ref{fiber}) we get a 
proof of the proposition.
\end{proof}

%\subsection{Comparison with double shuffle relations}

\begin{theorem}
Generalized shuffle relations (\ref{gsh}) and 
generalized stuffle relations (\ref{gst})
jointly imply shuffle relations (\ref{esh}) and stuffle relations 
(\ref{est}).
\end{theorem}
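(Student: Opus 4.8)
The statement splits into two independent claims. \emph{Generalized shuffle implies shuffle:} this is the remark in the proof of Proposition~\ref{Sh} made explicit. Take for $T_1$, $T_2$ the cyclically ordered labelling sets $\{0,\infty,1,t_1,\dots,t_{\wt{k}}\}$ and $\{0,\infty,1,s_1,\dots,s_{\wt{l}}\}$ of the standard simplices computing $\z{k}$ and $\z{l}$, glued along $\set{3}=\{0,\infty,1\}$, and for $\phi$, $\psi$ the corresponding integrands from (\ref{iter}), so that $\int_{\cyc{T_1}}\phi=\z{k}$ and $\int_{\cyc{T_2}}\psi=\z{l}$. Then (\ref{gsh}) reads $\z{k}\z{l}=\sum_{C\in\Msh(T_1,T_2)}\int_{\cyc{C}}\beta_C^*(\phi\boxtimes\psi)$. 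A cyclic order $C$ with monotone projections to $T_1$ and $T_2$ is an interleaving of $(t_i)$ with $(s_j)$, hence a term of $\sh(\sq{k},\sq{l})$, and $\cyc{C}$ is the Stasheff polytope of the associated simplex, so $\int_{\cyc{C}}\beta_C^*(\phi\boxtimes\psi)=\z{s}$ for the corresponding $\sq{s}$; this is (\ref{sh}).

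\emph{Generalized shuffle and generalized stuffle imply stuffle:} the plan is to revisit the proof of Proposition~\ref{St} and check that every step there is licensed either by (\ref{gsh}) or (\ref{gst}), by the Fubini theorem, or by a pointwise identity of differential forms. Three kinds of steps occur. The passage from $\z{k}\z{l}$ to the cube-integral (\ref{e1}) is the Fubini theorem in cubical coordinates; equivalently it is (\ref{gsh}) read in reverse, the cube $\square$ being the union of the simplices $\cyc{C}$, $C\in\Msh(T_1,T_2)$, over which $\beta_C^*(\phi\boxtimes\psi)$ glues to the integrand of (\ref{e1}). The uses of (\ref{relation}) to split an integrand in three need no input at all: by the computation (\ref{calc}), (\ref{relation}) is Arnold's relation on $\Md{S}$, a pointwise equality of regular forms, and the bookkeeping of the parameters $m_j$ in (\ref{e3}) keeps each of the three terms a genuine cell-zeta integrand, the base case (\ref{zcube}) being the definition of $\z{\cdot}$.

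The essential steps are the applications of the reflections $\sw{a}$ of (\ref{trans}), as in (\ref{e2}), (\ref{e6}), (\ref{e8}); I claim each of them is an instance of the generalized stuffle relation (\ref{gst}) for $\nu$ a cyclic-order-reversing element of the Klein four-group. In simplicial coordinates $\sw{a}$ fixes $0$, $\infty$, $1$, $t_a$ and $t_{a+1},\dots$, and acts, up to a relabelling that restores the standard coordinates, on the points $t_1,\dots,t_{a-1}$ lying between $t_a$ and $1$ by the M\"obius involution $t\mapsto t_a/t$, which is the element of the Klein four-group of $\set{4}=\{0,\infty,1,t_a\}$ exchanging $\{0,\infty\}$ with $\{1,t_a\}$ — precisely the move formalized in Proposition~\ref{Gst}. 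Applying Proposition~\ref{Gst} with $T_2$ the configuration of the ``inner'' points $t_1,\dots,t_{a-1}$ together with $\set{4}$, and $T_1$ that of the ``outer'' points together with $\set{4}$: the inner and outer points occupy disjoint arcs of $\set{4}$, so $\Mst(T_1,T_2)$ and $\Mst(T_1,T_2^\nu)$ each consist of a single cyclic order, (\ref{gst}) reduces to the one equality of cell-zeta values realized by $\sw{a}$, and the degree condition $\deg\phi+\deg\psi=|T_1|+|T_2|-7$ is exactly the splitting of the integrand of (\ref{e1}) into a top-degree factor and a codegree-one factor. Granting this, the whole proof of Proposition~\ref{St}, hence (\ref{est}), follows from (\ref{gsh}) and (\ref{gst}).

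The main obstacle is precisely this last identification, with its sign. One must choose, for each $\sw{a}$, the correct four-element subset and the correct splitting $T_1\sqcup T_2$ so that the single cyclic orders in $\Mst(T_1,T_2)$ and $\Mst(T_1,T_2^\nu)$ are exactly the simplices appearing in (\ref{e1})--(\ref{e8}), and then check that the sign $\epsilon=(-1)^{(|T_2|-3)(|T_2|-2)/2}$ of (\ref{gst}) agrees with the change of orientation produced by $\sw{a}$ under the conventions used in the proof of Proposition~\ref{St}. The small and degenerate cases ($\sw{1}$, which is the identity, and $\sq{k}=()$ or $\sq{l}=()$) are treated separately. Once this dictionary between the reflections $\sw{a}$ and the maps $\gamma_C$ is set up, the rest is the verification already carried out in Proposition~\ref{St}.
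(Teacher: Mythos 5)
Your proposal follows essentially the same route as the paper: generalized shuffle with $T_1,T_2$ glued along $\{0,\infty,1\}$ gives Proposition~\ref{Sh}; for Proposition~\ref{St} one re-derives (\ref{e1}) from Proposition~\ref{Gsh} (the paper glues along $\{0,\infty,t_{\wt{k}}\}$ so that the simplices $\cyc{C}$ tile the cube, as you indicate), treats (\ref{relation}) as Arnold's relation, and realizes each reflection $\sw{a}$ as the instance of Proposition~\ref{Gst} with $\set{4}=\{0,\infty,1,t_a\}$, $T_2$ the inner points, and $\nu$ the order-reversing involution exchanging $0$ with $\infty$ and $1$ with $t_a$ — exactly the dictionary the paper sets up, and your observation that $\Mst(T_1,T_2)$ is then a singleton is the correct (and in the paper implicit) reason the relation collapses to the single equality (\ref{e2}), (\ref{e6}), (\ref{e8}). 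The sign verification you flag as the remaining obstacle is left at the same level of detail in the paper itself.
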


\begin{proof}
From (\ref{omegas}) we see that for a sequence $\sq{k}$ the differential form
under the integral sign in (\ref{iter}) is a top-degree
form on $\M{\wt{k}+3}$. If  sequence $\sq{k}$ is convergent, then
the form comes from $\Md{\wt{k}+3}$ for the standard
cyclic order $[0, \infty, 1, t_1, \dots, t_{\wt{k}}]$ on the labeling set.
Given a pair of such forms corresponding to sequences $\sq{k}$ and $\sq{l}$,
applying Proposition \ref{Gsh} to them
with 
$$
T_1=[0, \infty, 1, t_1, \dots, t_{\wt{k}}] \quad\mbox{and} \quad T_2=[0, \infty, 1, s_1, \dots, s_{\wt{l}}]
$$
and the common subset $[0, \infty, 1]$
we get (\ref{sh}). Thus generalized shuffle relations
imply shuffle relations.

Now show that generalized shuffle and generalized stuffle relations jointly
imply stuffle relations. To do it we need to verify that
all steps of the proof of Proposition \ref{St} 
follow from generalized shuffle and generalized stuffle relations.

As it is mentioned in \cite{B,S}, 
the first relation (\ref{e1}) of the proof follows from the Proposition \ref{Gsh}
for differential forms corresponding to sequences 
$\sq{k}$ and $\sq{l}$ and for
$$
T_1=[0, \infty, 1, t_1, \dots, t_{\wt{k}}] \quad\mbox{and} \quad T_2=[0, \infty, t_{\wt{k}}, s_1, \dots, s_{\wt{l}}]
$$
with the common subset $[0, \infty, t_{\wt{k}}]$.

The rest of the proof of Proposition \ref{St} depends on two statements:
invariance of  integrals under transformations (\ref{trans})
and relation (\ref{relation}). The second one is a form of Arnold's relations by (\ref{calc}).
Invariance of an integral under transformation $\sw{a}$
follows from Proposition \ref{Gst} for 
$$
T_1=[0, \infty, 1, t_a, t_{a+1}, \dots, t_{n}] \quad\mbox{and} \quad T_2=[0, \infty, 1, t_{1}, t_2, \dots, t_a]
$$
with the common subset $[0, \infty, 1, t_a]$
and the involution $\nu$, which interchanges $0$ and $\infty$.
There are three places where invariance
under these transformations is used in the proof
of Proposition \ref{St}: 
(\ref{e2}), (\ref{e6}) and (\ref{e8}).
Corresponding differential forms $\phi$ and $\psi$
from the statement of Proposition \ref{Gst}
may be found from these formulae, 
for (\ref{e2}) and (\ref{e6}) the first form is being of top degree and
for (\ref{e8}) the second form is. 
\end{proof}

\subsection{Formal algebra of periods of $(\Md{*}, \Md{*}\setminus\M{*})$}
If differential forms in Propositions \ref{Gsh} and \ref{Gst}
are regular with logarithmic singularities at infinity,
then forms under integral signs in (\ref{gsh}) and (\ref{gst})
are also regular with logarithmic singularities at infinity.
Thus Propositions \ref{Gsh} and \ref{Gst}
impose quadratic-linear and linear conditions on 
periods of $(\Md{*}, \Md{*}\setminus\M{*})$ or cell-zeta values, which are integrals $\int_{\cyc{S}} \omega$
of a regular  differential form  with logarithmic singularities at infinity
$\omega$ on $\Md{S}$ by the standard simplex. By Proposition 
\ref{Gsh} they form an algebra.

One may consider the formal algebra of periods of $(\Md{*}, \Md{*}\setminus\M{*})$,
which is the one generated by symbols representing integrals as above 
on which all natural relations such as Stokes' theorem,  
and generalized shuffle and generalized stuffle relations are imposed. 
Formal multiple zeta values are elements of this algebra corresponding to 
iterated integrals (\ref{iter}).

The main theorem of \cite{B} states that all
cell-zeta values are rational combinations
of multiple zeta values.
The  long-standing 
conjecture (\cite[Conjecture 1]{IKZ}) states that all rational relations
between multiple zeta values are given by regularized double shuffle relations. 
This leads us to the following conjecture.

\begin{conj}
The formal  algebra of periods of $(\Md{*}, \Md{*}\setminus\M{*})$  is generated by formal 
multiple zeta values and the system of relations to which they obey
is equivalent to  regularized double shuffle relations.
\end{conj}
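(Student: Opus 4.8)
The plan is to split the conjecture into a \emph{generation} claim---that the formal multiple zeta values generate the whole formal algebra---and a \emph{relation-matching} claim---that the relations they satisfy inside this algebra are precisely the regularized double shuffle relations---and to attack each by lifting geometric statements to the formal level. For generation I would revisit the constructive proof of the main theorem of \cite{B}, which reduces every cell-zeta value to a rational combination of multiple zeta values, and check step by step that each elementary move it performs is either an instance of Stokes' theorem for a regular logarithmic form on some $\Md{S}$, or an instance of the product map relation of Proposition \ref{Gsh}, or of the generalized stuffle relation of Proposition \ref{Gst}. Since the formal algebra of periods of $(\Md{*}, \Md{*}\setminus\M{*})$ is defined by imposing exactly these relations, such a verification would lift the reduction verbatim and show that the classes of the iterated integrals (\ref{iter}) generate it.

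For the relation-matching claim one inclusion is largely in hand: the theorem proved above already derives the unregularized shuffle (\ref{sh}) and stuffle (\ref{st}) relations from Propositions \ref{Gsh} and \ref{Gst}, so the formal multiple zeta values satisfy at least ordinary double shuffle. To upgrade these to the \emph{regularized} double shuffle relations I would realize the regularizations as limits along the boundary divisors of $\cyc{S}\subset\Md{S}$. A divergent sequence corresponds to a form acquiring a logarithmic pole on a divisor of $\Md{S}\setminus\M{S}$, and the tangential-base-point renormalization should reappear as the generalized shuffle or generalized stuffle relation applied to the restriction of that form to the divisor. Making this precise is exactly the promised motivic refinement of Proposition \ref{Gst}, and the boundary stratification of the Stasheff polytope $\cyc{S}$ is the natural framework in which to carry it out.

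The reverse inclusion---that every relation among the formal multiple zeta values forced by Stokes together with (\ref{gsh}) and (\ref{gst}) is already a consequence of regularized double shuffle---is where the real difficulty lies, and I expect it to be the main obstacle. It demands a presentation of the formal algebra by generators and relations together with a normal-form argument showing that, once all cell-zeta generators are reduced to multiple zeta values by the algorithm of the first paragraph, no relation stronger than regularized double shuffle survives. This is a formal analogue of Conjecture~1 of \cite{IKZ}, and controlling the entire consequence ideal of the two infinite families (\ref{gsh}) and (\ref{gst}) on the generators (\ref{iter}) seems to require genuinely new structural input---most plausibly the motivic coaction on periods of $(\Md{*}, \Md{*}\setminus\M{*})$ flagged for future work. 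Consequently a complete proof is unlikely with present methods; what is realistically attainable now is the generation claim of the first paragraph together with the easy inclusion of the second, reducing the conjecture to the motivic refinement and the hard upper bound on relations.
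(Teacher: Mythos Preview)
The statement you are addressing is not a theorem but a \emph{conjecture}: the paper does not supply a proof, nor does it claim one. It is introduced with ``This leads us to the following conjecture'' and followed only by a brief comparison with the conjecture of \cite{BCS} and the remark that dihedral relations are a special case of generalized stuffle relations. So there is no ``paper's own proof'' to compare your proposal against.

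What you have written is a reasonable strategic outline, and you yourself correctly diagnose that the reverse inclusion is essentially of the same depth as Conjecture~1 of \cite{IKZ} and hence out of reach. That diagnosis is consistent with the paper's stance: the author flags the motivic version as future work and does not attempt the conjecture. Your generation step (lifting the algorithm of \cite{B} to the formal algebra) and your ``easy inclusion'' (deriving double shuffle from Propositions~\ref{Gsh} and~\ref{Gst}) are plausible partial results, but neither is carried out in the paper, and the regularization step you sketch---realizing tangential base points via boundary divisors of $\cyc{S}$---would itself require substantial new work beyond anything in the text. In short: your proposal is an honest research plan rather than a proof, and the paper agrees with you that no proof currently exists.
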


An analogous conjecture was formulated in \cite{BCS}.
The formal   algebra of cell-zeta values defined there
has the same generators, but relations differ.
Its system of relations contains product map relations, which are the same as
generalized shuffle relations,
dihedral relations and shuffles with respect to one element.
It would be  interesting to compare these algebras.
Note that dihedral relations follow from generalized stuffle relations
for $|T_1|=4$ and $\phi=1$.

\bibliographystyle{alpha}
\bibliography{stuffle}

\end{document}